\documentclass[a4paper,12pt]{amsart}
\usepackage{amsfonts,latexsym,amssymb,amscd,amsxtra}

\usepackage{ifthen}
\usepackage{graphicx}
\nonstopmode \numberwithin{equation}{section}

\setlength{\paperwidth}{210mm} \setlength{\paperheight}{297mm}
\setlength{\oddsidemargin}{0mm} \setlength{\evensidemargin}{0mm}
\setlength{\topmargin}{-20mm} \setlength{\headheight}{10mm}
\setlength{\headsep}{13mm} \setlength{\textwidth}{160mm}
\setlength{\textheight}{230mm} \setlength{\footskip}{15mm}
\setlength{\marginparwidth}{0mm} \setlength{\marginparsep}{0mm}

\pagestyle{plain}
\usepackage[usenames]{color}
\newtheorem{thm}{Theorem}[section]
\newtheorem{lem}{Lemma}[section]
\newtheorem{cor}[thm]{Corollary}
\newtheorem{prop}[thm]{Proposition}

\newtheorem{step}{Step}[section]

\theoremstyle{definition}
\newtheorem{mlem}{Main lemma}[section]
\newtheorem{assertion}{Assertion}[section]
\newtheorem{cl}{Claim}[section]
\newtheorem{ca}{Case}[section]
\newtheorem{sca}{Subcase}[section]
\newtheorem{scl}{Subclaim}[section]
\newtheorem{conj}[thm]{Conjecture}
\newtheorem{fact}{Fact}[section]
\newtheorem{defn}[thm]{Definition}
\newtheorem{op}[thm]{Open Problem}

\newtheorem{rem}[thm]{Remark}
\newtheorem{example}[thm]{Example}

\numberwithin{equation}{section}


\newcounter {own}
\def\theown {\thesection       .\arabic{own}}

\newenvironment{pf}[1][]{%
 \vskip 3mm
 \noindent
 \ifthenelse{\equal{#1}{}}%
  {{\slshape Proof. }}%
  {{\slshape #1.} }%
 }%
{\qed\bigskip}

\newcounter{alphabet}
\newcounter{tmp}
\newenvironment{Thm}[1][]{\refstepcounter{alphabet}%
\bigskip%
\noindent%
{\bf Theorem \Alph{alphabet}}%
\ifthenelse{\equal{#1}{}}{}{ (#1)}%
{\bf .} \itshape}{\vskip 8pt}

\makeatletter
\newcommand{\RRef}[1]{\@ifundefined{r@#1}{}{\setcounter{tmp}{\ref{#1}}\Alph{tmp}}}
\makeatother

\newcounter{alphabet2}

\newcommand{\zb}{{\overline{z}}}

\newcommand{\D}{{\mathbb D}}
\newcommand{\T}{{\mathbb T}}

\newcommand{\im}{{\operatorname{Im}\,}}

\newcommand{\re}{{\operatorname{Re}\,}}

\newcommand{\arctanh}{{\operatorname{arctanh}}}


\def\be{\begin{equation}}
\def\ee{\end{equation}}

\newcommand{\ben}{\begin{enumerate}}
\newcommand{\een}{\end{enumerate}}

\newcommand{\blem}{\begin{lem}}
\newcommand{\elem}{\end{lem}}
\newcommand{\bthm}{\begin{thm}}
\newcommand{\ethm}{\end{thm}}
\newcommand{\bcor}{\begin{cor}}
\newcommand{\ecor}{\end{cor}}
\newcommand{\beg}{\begin{exam}}
\newcommand{\eeg}{\end{exam}}
\newcommand{\begs}{\begin{examples}}
\newcommand{\eegs}{\end{examples}}
\newcommand{\bdefe}{\begin{defn}}
\newcommand{\edefe}{\end{defn}}
\newcommand{\bques}{\begin{ques}}
\newcommand{\eques}{\end{ques}}
\newcommand{\bei}{\begin{itemize}}
\newcommand{\eei}{\end{itemize}}
\newcommand{\bcon}{\begin{conj}}
\newcommand{\econ}{\end{conj}}
\newcommand{\bop}{\begin{op}}
\newcommand{\eop}{\end{op}}

\newcommand{\bas}{\begin{assertion}}
\newcommand{\eas}{\end{assertion}}

\newcommand{\bfa}{\begin{fact}}
\newcommand{\efa}{\end{fact}}

\newcommand{\bca}{\begin{ca}}
\newcommand{\eca}{\end{ca}}

\newcommand{\bst}{\begin{step}}
\newcommand{\est}{\end{step}}

\newcommand{\bsca}{\begin{sca}}
\newcommand{\esca}{\end{sca}}

\newcommand{\bcl}{\begin{cl}}
\newcommand{\ecl}{\end{cl}}

\newcommand{\bmlem}{\begin{mlem}}
\newcommand{\emlem}{\end{mlem}}

\newcommand{\bscl}{\begin{scl}}
\newcommand{\escl}{\end{scl}}

\newcommand{\bcons}{\begin{conjs}}
\newcommand{\econs}{\end{conjs}}

\newcommand{\bprop}{\begin{prop}}
\newcommand{\eprop}{\end{prop}}

\newcommand{\br}{\begin{rem}}
\newcommand{\er}{\end{rem}}
\newcommand{\brs}{\begin{rems}}
\newcommand{\ers}{\end{rems}}
\newcommand{\bo}{\begin{obser}}
\newcommand{\eo}{\end{obser}}
\newcommand{\bos}{\begin{obsers}}
\newcommand{\eos}{\end{obsers}}
\newcommand{\bpf}{\begin{pf}}
\newcommand{\epf}{\end{pf}}
\newcommand{\ba}{\begin{array}}
\newcommand{\ea}{\end{array}}
\newcommand{\beq}{\begin{eqnarray}}
\newcommand{\beqq}{\begin{eqnarray*}}
\newcommand{\eeq}{\end{eqnarray}}
\newcommand{\eeqq}{\end{eqnarray*}}

\newcounter{minutes}\setcounter{minutes}{\time}
\divide\time by 60
\newcounter{hours}\setcounter{hours}{\time}
\multiply\time by 60 \addtocounter{minutes}{-\time}

\begin{document}

\bibliographystyle{amsplain}
\title []
{Estimates of partial derivatives for harmonic functions on the unit disc}

\def\thefootnote{}
\footnotetext{ \texttt{\tiny  \number\day-\number\month-\number\year
          }
} \makeatletter\def\thefootnote{\@arabic\c@footnote}\makeatother

\subjclass[2010]{Primary: 30C62, 31A05; Secondary: 30H10, 30H20.}
 \keywords{Poisson integral,  Hardy  space, Bergman  space, Harmonic conjugate}

\author{Adel Khalfallah}
\address{Department of Mathematics, King Fahd University of Petroleum and
	Minerals, Dhahran 31261, Saudi Arabia}\email{khelifa@kfupm.edu.sa}

\author{ Miodrag Mateljevi\'c}
\address{M. Mateljevi\'c, Faculty of mathematics, University of Belgrade, Studentski Trg 16, Belgrade, Republic of Serbia}
\email{miodrag@matf.bg.ac.rs}
\maketitle
\begin{abstract}
Let $f = P[F]$ denote the Poisson integral of $F$ in the unit disk $\D$ with $F$ is an absolute continuous in the unit circle $\T$ and $\dot{F}\in  L^p(\T)$, where $\dot{F}(e^{it}) = \frac{d}{dt} F(e^{it})$
and $p \in [1,\infty]$. Recently,    Chen et al. \cite{CPW} (J. Geom. Anal., 2021) extended Zhu's results \cite{Zhu} (J. Geom. Anal., 2020) and  proved that (i) if  $f$ is a
harmonic mapping and $1 \leq p < \infty$, then $f_z$ and $\overline{f_{\bar{z}}} \in B^p(\D)$, the  Bergman
spaces of $\D$.   Moreover,  (ii) under additional conditions  as  $f$ being harmonic quasiregular mapping in \cite{Zhu} or $f$  being  harmonic elliptic mapping in \cite{CPW},  they proved that  $f_z$ and $\overline{f_{\bar{z}}}\in  H^p(\D)$, the  Hardy space of $\D$, for $1 \leq p \leq \infty$. The aim of this paper is to extend these  results by showing that (ii) holds for $p\in(1,\infty)$ without any extra conditions and for $p=1$ or $p=\infty$,   $f_z$ and $\overline{f_{\bar{z}}}\in  H^p(\D)$ if and only if $H(\dot{F})\in L^p(T)$, the Hilbert transform of $\dot{F}$ and in that case, it yields  $zf_z=P[\frac{\dot{F}+iH(\dot{F})}{2i}]$.

\end{abstract}

\maketitle \pagestyle{myheadings}
\markboth{ A. Khalfallah  and M Mateljevi\'c}{}

\section{Preliminaries}

We denote by $\D$ 
  the unit disk and   $\T:=\partial\D$  the
unit circle.
For $z=x+iy\in\mathbb{C}$, the two complex
differential operators are defined by

$$\partial_z=\frac{1}{2}\left(\frac{\partial}{\partial x}-i\frac{\partial}{\partial y}\right)~\mbox{and}~
\overline{\partial}_z=\frac{1}{2}\left(\frac{\partial}{\partial x}+i\frac{\partial}{\partial y}\right).$$

Also, we denote by  $h(\D)$ (resp. $H(\D)$ the set of harmonic (resp. analytic) functions on $\D$.

\subsection{Hardy and Bergman spaces}\hfill

For $p\in[1,\infty]$, the {\it harmonic Hardy space
${h}^{p}(\mathbb{D})$} consists of all
harmonic   functions from $\mathbb{D}$ to $\mathbb{C}$ such that
$M_{p}(r,f)$ exists for all $r\in(0,1)$, and $ \|f\|_{p}<\infty$,
where
$$M_{p}(r,f)=\left(\frac{1}{2\pi}\int_{0}^{2\pi}|f(re^{i\theta})|^{p}\,d\theta\right)^{\frac{1}{p}}
$$
and
$$\|f\|_{p}=
\begin{cases}
\displaystyle\sup \{M_{p}(r,f):\; 0<r <1\}
& \mbox{if } p\in[1,\infty),\\
\displaystyle\sup\{|f(z)|:\; z\in\mathbb{D}\} &\mbox{if } p=\infty.
\end{cases}$$

The  {\it analytic Hardy space} $H^{p}(\mathbb{D})$ is the set of  all  elements of $h^{p}(\D)$ which are analytic. (cf. \cite{Du,Du1}).\\

Denote by $L^{p}(\mathbb{T})~(p\in[1,\infty])$ the space of all measurable functions  $F$
of $\mathbb{T}$ into $\mathbb{C}$ with

$$\|F\|_{L^{p}}=
\begin{cases}
\displaystyle\left(\frac{1}{2\pi}\int_{0}^{2\pi}|F(e^{i\theta})|^{p}d\theta\right)^{\frac{1}{p}}
& \mbox{if } p\in[1,\infty),\\
\displaystyle \sup\{|F(e^{i\theta})|:\; \theta\in [0, 2\pi)\} &\mbox{if } p=\infty.
\end{cases}
$$

For  $z\in\mathbb{D}$, let
$$P(z)=\frac{1-|z|^{2}}{|1-z|^{2}}$$ be the {\it Poisson kernel}.
For a mapping $F\in L^{1}(\mathbb{T})$, the {\it Poisson integral} of $F$
is defined by

$$f(z)=P[F](z)=\frac{1}{2\pi}\int_{0}^{2\pi}P(ze^{-i\theta})F(e^{i\theta})d\theta.$$

It is well known that a  function $f$ belongs to $h^p(\D)$ ($1 < p \leq \infty$) if and only if $f=P[\phi]$, with  $\phi \in  L^p(\T)$. And if $f = P[\phi]$, $\phi \in  L^p(\T)$, then $$\|f\|_p=\|\phi\|_p, \quad (1\leq p \leq \infty),$$ and 
$$f_*(e^{i\theta}):=\lim_{r\to 1^-} f(re^{i\theta})=\phi(e^{i\theta})  \quad a.e.$$
Thus for $p\in (1,\infty]$, $P: L^{p}(\T) \to h^p(\D)$ is an isometry, see \cite{Du,Gar,Koos}.\\

For $p\in[1,\infty]$, the {\it Bergman space
$B^{p}(\mathbb{D})$} consists of all analytic functions $f:\;\mathbb{D}\rightarrow\mathbb{C}$ such that
$$\|f\|_{b^{p}}=
\begin{cases}
\displaystyle\left(\int_{\mathbb{D}}|f(z)|^{p}d\sigma(z)\right)^{\frac{1}{p}}
& \mbox{if } p\in[1,\infty),\\
\displaystyle \sup\{|f(z)|:\; z\in \mathbb{D}\} &\mbox{if } p=\infty,
\end{cases}
$$
where  $d\sigma(z)=\frac{1}{\pi}dxdy$ denotes  the normalized Lebesgue area measure on $\D$. Obviously, $H^{p}(\D)\subset B^{p}(\D)$ for each $p\in [1,\infty]$,  (cf. \cite{HKZ}).\\

\subsection{Harmonic conjugates and M. Riesz theorem}
If $f$ is a harmonic  function  on $\D$  given by 
$$f(re^{i\theta})= \sum_{n=-\infty}^{\infty} c_n r^{|n|} e^{in\theta},
$$
then its harmonic conjugate is defined by 
$$\tilde{f}(re^{i\theta}):= \sum_{n=-\infty}^{\infty} m_n c_n r^{|n|} e^{in\theta},$$ 
where $m_n=-i\, {\rm sign}\, n$; in particular $\tilde{f}(0)=0$. In addition, we have  
\be \label{cong}
f(z)+i \tilde{f}(z)=-f(0) + 2 \sum_{n=0}^{\infty} c_n z^n.
\ee
The expression $\sum_{n\geq 0} c_n z^n$ is called the {\it Riesz projection} of $f$, and denoted by $P_+f$. i.e.,
$$P_+ : h(\D) \to H(\D) $$ is  defined by $$P_+(f)(z):=\sum_{n\geq 0} c_n z^n.$$
Thus Eq. (\ref{cong}) can be rewritten as 
\be\label{harconj2}
f+i\tilde{f}=-f(0)+2P_{+}(f). 
\ee
\noindent Next, assume that $f=P[\phi], \phi \in L^1(\T)$, the function conjugate to $P[\phi]$, equals

$$\tilde{P}[\phi](z)= \frac{1}{2\pi}\int_{0}^{2\pi} \tilde{P}(ze^{-i\theta})\phi(e^{i\theta})d\theta, $$

\noindent here $\tilde{P}$ denotes the conjugate Poisson kernel,
 $$\tilde{P}(z)=\im \frac{1+z}{1-z}=\frac{2r \sin \theta}{1+r^2-2r \cos(\theta)}, \quad z=re^{i\theta}.$$
 The Riesz projection $P_+$ may also be written as a Cauchy type integral,
 \be 
 P_+(P[\phi])(z)=C(\phi)(z):= \frac{1}{2\pi i} \int_{\T} \frac{\phi(w)}{w-z}\, dw, \quad z\in \D.
 \ee

Remark that in the case $f=P[\phi]$, the Eq. (\ref{harconj2}) can be rewritten as
\be\label{cong:2}
P[\phi]+i \tilde{P}[\phi]=P[\phi](0)+2P_+(\phi). 
\ee

Moreover, if $\phi \in L^1(\T)$, then $\tilde{P}[\phi]$ has radial limits almost everywhere and there holds the relation
$$
\lim_{r\to 1^{-}} \tilde{P}[\phi](re^{i\theta})=\tilde{\phi}(e^{i\theta}), 
$$

\noindent where $\tilde{\phi}$ is the {\it Hilbert transform} of $\phi$ given by 
$$ H(\phi)(e^{i\theta})=\tilde{\phi}(e^{i\theta}):= \frac{1}{2\pi} \,{ \rm p.v. } \int_{-\pi}^{\pi} \frac{\phi(e^{i\theta})}{\tan(\frac{\theta-t}{2})} \, dt. $$

The Hilbert operator maps 
$L^1(\T)$ into $L^p(\T)$, for every $p <1$, but not into $L^1(\T)$, so in the general case the Poisson integral of $\tilde{\phi}$ has no sense. However we have 

\begin{Thm}{\cite{Mash}}
If $\phi\in L^1(\T)$ and $\tilde{\phi}\in L^1(\T)$, then 
$$P[\tilde{\phi}]=\tilde{P}[\phi]. $$
\end{Thm}

It is natural to consider the following question: if $f$ is harmonic in $\D$ and $
 f \in h^p(\D)$, $1\le p \le \infty$, does $\tilde{f}\in h^p(\D)$ (or $P_+(f) \in H^p(\D)$)?\\
The question has an affirmative answer for all $1<p<\infty$. This is the content of the famous theorem proved by M. Riesz \cite{Riesz}, see also Rudin \cite[Theorem 17.26]{rud1}.

\begin{thm}(M. Riesz)  For $1<p<\infty$,  the Riesz projection $P_+$ 
$$P_+: L^p(\T) \to H^p(\D) $$
is a continuous operator on $L^p(\T)$, i.e.,
there is a constant $A_p$ such that 
\be\label{ries:ineq}
\|P_+(f)\|_p \le A_p \| f \|_p
\ee
holds for $\ f \in L^p(\T)$.
\end{thm}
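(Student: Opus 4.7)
The plan is to establish the bound first at $p=2$ by Plancherel's theorem, then at every even integer $p=2k$ via a classical identity due to M. Riesz himself exploiting the analyticity of $F^{2k}$, and finally extend to all $p\in(1,\infty)$ by Riesz--Thorin interpolation together with duality. A preliminary reduction: by Eq.~(\ref{cong:2}) one has $P_+(P[f]) = \tfrac12\bigl(P[f]+i\tilde P[f]\bigr)-\tfrac12 P[f](0)$, so proving the stated inequality for $P_+$ is equivalent to proving the conjugate function estimate $\|\tilde P[f]\|_p\le C_p\|f\|_p$. The $p=2$ case is then immediate: if $f(e^{i\theta})=\sum_n c_n e^{in\theta}$, Plancherel gives $\|f\|_2^2=\sum|c_n|^2$, and the Fourier multiplier defining $\tilde P[f]$ (or $P_+(P[f])$) is bounded by $1$.

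For $p=2k$ an even integer I would split $f$ into real and imaginary parts and treat real $f$. Setting $u=P[f]$, $v=\tilde P[f]$, the analytic function $F:=u+iv$ satisfies $v(0)=0$, so $F^{2k}$ is analytic and the mean value property gives, for every $r<1$,
$$u(0)^{2k}\;=\;\frac{1}{2\pi}\int_0^{2\pi}\re\,F(re^{i\theta})^{2k}\,d\theta\;=\;\sum_{m=0}^{k}(-1)^m\binom{2k}{2m}\,\frac{1}{2\pi}\int_0^{2\pi} u^{2k-2m}v^{2m}\,d\theta.$$
Isolating the $m=k$ term, applying Hölder to each of the lower-order terms in the form $\frac{1}{2\pi}\int|u|^{2k-2m}v^{2m}\,d\theta\le M_{2k}(r,u)^{2k-2m}M_{2k}(r,v)^{2m}$, and using $|u(0)|\le M_{2k}(r,u)$, I obtain a polynomial inequality of the shape $X^{2k}\le\sum_{m=0}^{k-1}c_m X^{2m}$ in the ratio $X:=M_{2k}(r,v)/M_{2k}(r,u)$, with coefficients $c_m$ depending only on $k$. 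This forces $X\le A_{2k}$ uniformly in $r<1$, which gives $\|\tilde P[f]\|_{2k}\le A_{2k}\|f\|_{2k}$.

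To reach a general $p\in(1,\infty)$, I would apply Riesz--Thorin interpolation to $P_+$ between the endpoints $p=2$ and $p=2k$ (with $2k$ chosen above the target exponent), covering every $p\in[2,\infty)$. For $1<p<2$ with conjugate exponent $q\in(2,\infty)$, duality finishes the job: modulo the scalar term $\tfrac12 P[f](0)$, the adjoint of $P_+$ with respect to the $L^2$-pairing $\langle g,h\rangle=\tfrac{1}{2\pi}\int g\bar h\,d\theta$ is, up to complex conjugation, again a Riesz-type projection, so its boundedness on $L^q$ (already obtained) transfers to the desired bound on $L^p$.

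The main obstacle is the absorption step inside the even-integer case: once the binomial identity is written out, one has to confirm that the polynomial inequality $X^{2k}\le\sum_{m<k}c_m X^{2m}$ really traps $X$ (which rests on the leading coefficient being $1$ while the others are fixed numbers depending only on $k$), and then check that the resulting $A_{2k}$ is independent of $r$ so that one may take the supremum defining the $H^p$-norm. The constants produced in this way are far from sharp; the sharp constants, due to Pichorides, require a different argument based on choosing a suitable subharmonic majorant, but for the qualitative boundedness stated here the classical bookkeeping above suffices.
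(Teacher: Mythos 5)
The paper does not prove this statement at all: it is quoted as M.~Riesz's classical theorem, with the proof delegated to \cite{Riesz} and to Rudin \cite[Theorem 17.26]{rud1}. So there is no internal argument to compare against; what you have written is the standard textbook proof (essentially the one going back to Riesz via the even-exponent trick, as opposed to Rudin's subharmonic-majorant argument), and it is sound. The reduction via Eq.~(\ref{cong:2}) to the conjugate-function bound is legitimate since the scalar term $\tfrac12 P[f](0)$ is controlled by $\|f\|_1\le\|f\|_p$; the $p=2$ case by Plancherel is immediate; the binomial identity $u(0)^{2k}=\sum_{m=0}^{k}(-1)^m\binom{2k}{2m}\frac{1}{2\pi}\int u^{2k-2m}v^{2m}\,d\theta$ is correct for real $f$ (with $F=u+iv$ analytic and $v(0)=0$), and after H\"older the inequality $X^{2k}\le\sum_{m<k}c_mX^{2m}$ does trap $X$ below the largest positive root of $t^{2k}-\sum_{m<k}c_mt^{2m}$, a bound depending only on $k$ and uniform in $r$. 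Riesz--Thorin between $2$ and $2k$, plus the self-adjointness of $P_+$ (its multiplier is the real indicator $\mathbf{1}_{n\ge0}$) for the range $1<p<2$, completes the argument.

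Two small points you should make explicit if you write this out in full. First, the degenerate case $M_{2k}(r,u)=0$ must be dispatched separately (it forces $u\equiv0$ on $\overline{\D(0,r)}$ and hence $v\equiv v(0)=0$ there, so the inequality is trivial). Second, the splitting of a complex $f$ into real and imaginary parts, and the identification of $\sup_{r<1}M_p(r,\cdot)$ with the boundary $L^p$ norm needed to pass from the uniform-in-$r$ estimate to the stated $H^p$ bound, each cost only a harmless constant but are steps, not tautologies. Neither affects correctness.
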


Equivalently, for every $p\in (1,\infty)$ there exists a constant $B_p$ such that
$$\|u+i\tilde{u}\|_p \leq B_p\|u\|_p, $$
holds for any harmonic function   $u\in h^p(\D)$, and  $\tilde{u}$ is its harmonic conjugate  normalized by  $\tilde{u}(0)=0$.\\

Recently Hollenbeck and Verbitsky \cite{ver} proved that the best possible constant in the Riesz's inequality (\ref{ries:ineq}) is $A_p=\csc(\frac{\pi}{p})$.\\

It turns out that the Riesz theorem  is not true  for  $ p = 1$ and $p = \infty$. Indeed, if
$$f_1(z)=\frac{1+z}{1-z}. \quad f_2(z)=i\log(1-z), \quad z\in \D.
$$
Then $\re f_1=P$, the Poisson kernel, so $\re f_1 \in h^1,$ and $\re f_2 \in h^\infty$. However $f_1 \not \in H^1$, because its boundary function is not integrable, and $f_2$ is obviously unbounded. \\

As a consequence of M. Riesz's theorem, we deduce that the Hilbert transform sends $L^p(\T)$ into $L^p(\T)$ for $p\in(1,\infty)$.

\section{Main results}

In \cite{Zhu}, Zhu considered the following question :\\

Under what conditions on the boundary function $F$ ensure that the
partial derivatives of its harmonic extension $f=P[F]$, i.e., $f_z$ and $\overline{f_{\overline{z}}}$, are in the space $\mathcal{B}^{p}(\mathbb{D})$ (or $H^p(\D)$), where $p\in  [1,\infty]$?\\

 The author  proved Theorem B only for $p\in[1,2)$ and then extended  to $[1,\infty)$ by Chen et al. in \cite{CPW}.

\begin{Thm}{\rm (\cite[Theorem 1.2]{Zhu}),\cite[Theorem 1.1]{CPW})}\label{Zhu-1}
Suppose that $f=P[F]$ is a harmonic mapping in $\mathbb{D}$ and
$\dot{F}\in L^{p}(\mathbb{T})$, where $F$ is an  absolutely continuous  function. \ben
\item
If $p\in[1,\infty)$, then both $f_{z}$ and $\overline{f_{\overline{z}}}$ are in $\mathcal{B}^{p}(\mathbb{D}).$
\item
If $p=\infty$, then there exists a harmonic
mapping $f=P[F]$, with  $\dot{F}\in L^{\infty}(\mathbb{T})$, such that
neither $f_{z}$ nor $\overline{f_{\overline{z}}}$ is in $\mathcal{B}^{\infty}(\mathbb{D}).$
\een
\end{Thm}

Furthermore, under some additional conditions of $f$, they proved that the  partial derivatives are in $H^p(\D)$, for $p\in[1,\infty]$.

\begin{Thm}{\rm (\cite[Theorem 1.3]{Zhu},\cite[Theorem 1.2]{CPW})}\label{Zhu-2}
Suppose that $p\in[1,\infty]$ and $f=P[F]$ is a $(K,K')$-elliptic mapping in $\mathbb{D}$ with
$\dot{F}\in L^{p}(\mathbb{T})$, where $F$ is an absolute continuous function, $K\geq1$ and $K'\geq0$. Then both
$f_{z}$ and $\overline{f_{\overline{z}}}$ are in $H^{p}(\mathbb{D}).$
\end{Thm}

In \cite{CPW}, the authors showed  that Theorem C also holds true for harmonic elliptic mappings, which are more general than harmonic quasiregular mapping.\\

\noindent Our first main result is a refinement of the two previous theorems, we prove that for $1<p<\infty$, both
$f_{z}$ and $\overline{f_{\overline{z}}}$ are in $H^{p}(\mathbb{D})$ without any extra conditions on 
$f$.

\begin{thm}\label{main1}
Suppose that $F$ is an  absolute continuous  function on $\T$ and  $f=P[F]$ is a harmonic mapping in $\mathbb{D}$ and
$\dot{F}\in L^{p}(\mathbb{T})$.
\ben
\item
If $p\in(1,\infty)$, then both $f_{z}$ and $\overline{f_{\overline{z}}}$ are in $\mathcal{H}^{p}(\mathbb{D}).$ Moreover, there exits a constant $A_p$ such that
$$ \max(\|f_z \|_p, \|\overline{f_{\overline{z}}} \|_p) \leq A_p \| \dot{F}\|_p .$$

\item If $p=1$ or $p=\infty$, then both  
  $f_{z} $ and  $\overline{f_{\overline{z}}}$ are in $\mathcal{H}^p(\mathbb{D})$  if and only if  $H(\dot{F}) \in L^p(\T)$. Moreover
  $$2izf_z(z)=P[\dot{F}+iH(\dot{F})](z).$$
\een
\end{thm}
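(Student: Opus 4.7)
The plan is to derive a single algebraic identity that identifies $izf_z$ with the Riesz projection of the harmonic function $P[\dot{F}]$, and then reduce part (1) to M.~Riesz's theorem and part (2) to Maslov's theorem (Theorem~A). Writing $F(e^{i\theta})=\sum c_n e^{in\theta}$, so that $f(z)=\sum c_n r^{|n|}e^{in\theta}$, the absolute continuity of $F$ gives Fourier coefficients $inc_n$ for $\dot{F}$ and hence $P[\dot{F}](z)=\partial_{\theta}f(z)$. Applying the polar relation $\partial_{\theta}=iz\partial_{z}-i\bar{z}\partial_{\bar{z}}$ yields
\[
P[\dot{F}](z)=izf_z(z)-i\bar{z}f_{\bar{z}}(z).
\]
Since $f$ is harmonic, $izf_z$ is analytic and $-i\bar{z}f_{\bar{z}}$ is anti-analytic, both vanishing at the origin, so $P_{+}(P[\dot{F}])=izf_z$. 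Because $F$ is periodic and absolutely continuous, $P[\dot{F}](0)=\frac{1}{2\pi}\int_{-\pi}^{\pi}\dot{F}(e^{it})\,dt=0$, so equation (\ref{cong:2}) applied to $\phi=\dot{F}$ yields the central identity
\begin{equation}\label{plan-id}
2iz f_z(z)=P[\dot{F}](z)+i\tilde{P}[\dot{F}](z).
\end{equation}

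For part (1) with $p\in(1,\infty)$, M.~Riesz's theorem bounds $\|\tilde{P}[\dot{F}]\|_p\leq B_p\|\dot{F}\|_p$, so (\ref{plan-id}) shows that the analytic function $zf_z$ lies in $H^p(\D)$ with norm bounded by a constant multiple of $\|\dot{F}\|_p$. Since $zf_z$ vanishes at the origin, one has $\|f_z\|_{H^p}=\|zf_z\|_{H^p}$, so $f_z\in H^p(\D)$ with the asserted bound. The estimate for $\overline{f_{\bar{z}}}$ follows by applying the same argument to $\bar{f}=P[\bar{F}]$, since $(\bar{f})_z=\overline{f_{\bar{z}}}$ and $\overline{\dot{F}}\in L^p(\T)$ with $\|\overline{\dot{F}}\|_p=\|\dot{F}\|_p$.

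For part (2), assume first that $H(\dot{F})\in L^p(\T)$ with $p\in\{1,\infty\}$. Theorem~A then gives $\tilde{P}[\dot{F}]=P[H(\dot{F})]$, so (\ref{plan-id}) collapses to $2izf_z=P[\dot{F}+iH(\dot{F})]\in h^p(\D)$; being analytic, $zf_z\in H^p(\D)$, whence $f_z\in H^p(\D)$, and the displayed formula in the theorem is immediate. Conversely, if $f_z\in H^p(\D)$, then $2izf_z\in H^p(\D)$ has boundary values in $L^p(\T)$; taking radial limits in (\ref{plan-id}) and using that $P[\dot{F}]$ and $\tilde{P}[\dot{F}]$ have almost-everywhere radial limits $\dot{F}$ and $H(\dot{F})$, respectively, one concludes that $\dot{F}+iH(\dot{F})\in L^p(\T)$, and hence $H(\dot{F})\in L^p(\T)$. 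The case of $\overline{f_{\bar{z}}}$ is symmetric, via $\bar{f}$ and the identity $H(\overline{\dot{F}})=\overline{H(\dot{F})}$. The main subtlety is the converse direction at $p=1$: no a priori $L^1$-bound on $H(\dot{F})$ is available, so the argument must extract $L^1$ control from the $H^1$-regularity of $2izf_z$ first, and only then identify the almost-everywhere radial limits on both sides of (\ref{plan-id}) to force $H(\dot{F})\in L^1(\T)$.
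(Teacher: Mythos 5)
Your proof is correct and follows essentially the same route as the paper: both rest on the identity $2izf_z=P[\dot{F}]+i\tilde{P}[\dot{F}]=2P_+(\dot{F})$ (the paper's Eq.~(\ref{eq:fund})), with part (1) reduced to M.~Riesz's theorem and part (2) to Theorem~A (which is due to Mashreghi, not ``Maslov'') together with a.e.\ radial limits. The only difference is cosmetic: you obtain $izf_z=P_+[\dot F]$ from the Fourier expansion and the polar identity $f_\theta=izf_z-i\bar z f_{\bar z}$ (Lemma~\ref{zhu:lem}), whereas the paper derives it in Proposition~\ref{prop:integ} by integrating by parts in the Poisson integral.
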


Let  $z=re^{i\theta}\in {\mathbb {D}}$. The polar derivatives of 
a complex valued function $f$ are given as follows 
$$
\begin{aligned} \left\{ \begin{array}{lr} \displaystyle f_{\theta}(z):=\frac{\partial f(z)}{\partial \theta}=i\big (zf_{z}(z)-\overline{z}f_{\overline{z}}(z)\big )\\ \displaystyle f_{r}(z):=\frac{\partial f(z)}{\partial r}=f_{z}(z)e^{i\theta}+f_{\overline{z}}(z)e^{-i\theta}. \end{array} \right. \end{aligned}
$$

Thus, if $f$ is harmonic, then  $f_\theta$ and $rf_r$ are harmonic too and 
\be\label{rtheta}
2izf_z=f_\theta +irf_r. 
\ee

Here, we should mention  the following fundamental observation that $rf_r$ is a harmonic conjugate to $f_\theta$.\\

 The next lemma  shows that $f_\theta \in h^p(\T)$ for $f=P[F]$ and $\dot{F}\in L^p(\T)$, for $p\in [1,\infty]$.
\begin{lem}{\cite[Lemma 2.3]{Zhu}}\label{zhu:lem}
Suppose $1\leq p\leq \infty$, $f=P[F]$ is a harmonic mapping of $\D$ with an absolutely continuous boundary function $F$  satisfying $\dot{F}\in L^p(\T)$. Then
$$
f_\theta=P[\dot{F}]. 
$$

In particular, $f_\theta\in h^p(\D)$ and 
$\|f_\theta\|_p \leq \| \dot{F}\|_{L^p}$.

\end{lem}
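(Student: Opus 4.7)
The plan is to obtain the identity $f_\theta = P[\dot F]$ by differentiating the Poisson integral under the integral sign and then performing an integration by parts that converts the $\theta$-derivative of the Poisson kernel into $\dot F$, exploiting the absolute continuity of $F$. Write
$$
f(re^{i\theta}) = \frac{1}{2\pi}\int_0^{2\pi} P(re^{i(\theta-t)})\, F(e^{it})\, dt .
$$
For fixed $r<1$ the kernel $(\theta,t)\mapsto P(re^{i(\theta-t)})$ is $C^\infty$ and $2\pi$-periodic in both variables, so differentiation under the integral sign is immediate, giving
$$
f_\theta(re^{i\theta}) = \frac{1}{2\pi}\int_0^{2\pi} \partial_\theta P(re^{i(\theta-t)})\, F(e^{it})\, dt .
$$

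Next, I would use the key identity $\partial_\theta P(re^{i(\theta-t)}) = -\partial_t P(re^{i(\theta-t)})$, which is immediate from the chain rule. Substituting and integrating by parts in $t$, the boundary contribution $[-P(re^{i(\theta-t)})F(e^{it})]_0^{2\pi}$ vanishes by $2\pi$-periodicity of both factors, while the remaining term is
$$
\frac{1}{2\pi}\int_0^{2\pi} P(re^{i(\theta-t)})\, \frac{d}{dt} F(e^{it})\, dt = P[\dot F](re^{i\theta}).
$$
The integration by parts is justified precisely because $F$ is absolutely continuous on $\T$, so the fundamental theorem of calculus applies to the product $P(re^{i(\theta-t)})F(e^{it})$ as a function of $t$, and $\dot F \in L^1(\T)$ makes the resulting integral well defined.

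Once the representation $f_\theta = P[\dot F]$ is established, the $h^p$-estimate follows from the standard contractivity of the Poisson integral on $L^p(\T)$: for $1\le p<\infty$, applying Jensen's inequality to the probability measure $\frac{1}{2\pi}P(re^{i(\theta-t)})\,dt$ gives
$$
|P[\dot F](re^{i\theta})|^p \le \frac{1}{2\pi}\int_0^{2\pi} P(re^{i(\theta-t)})\, |\dot F(e^{it})|^p\, dt ,
$$
and integrating in $\theta$ and using Fubini together with $\frac{1}{2\pi}\int_0^{2\pi} P(re^{i(\theta-t)})\, d\theta = 1$ yields $M_p(r,f_\theta)\le \|\dot F\|_{L^p}$ for every $r\in(0,1)$. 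The case $p=\infty$ is immediate from the same positivity of the Poisson kernel. Hence $f_\theta\in h^p(\D)$ with $\|f_\theta\|_p\le \|\dot F\|_{L^p}$.

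There is no real obstacle here; the one technical point is making the integration by parts rigorous for an $F$ that is merely absolutely continuous rather than $C^1$, but this is a routine consequence of the fundamental theorem of calculus for absolutely continuous functions combined with periodicity. An alternative, equally quick route would be to expand $F(e^{it}) = \sum c_n e^{int}$, observe that absolute continuity implies the Fourier coefficients of $\dot F$ are $in c_n$, and compare the series $f(re^{i\theta}) = \sum c_n r^{|n|} e^{in\theta}$ term-by-term differentiated in $\theta$ against $P[\dot F](re^{i\theta}) = \sum (inc_n) r^{|n|} e^{in\theta}$; termwise differentiation is legitimate on compact subsets of $\D$ since the series converges absolutely and uniformly there.
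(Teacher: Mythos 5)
Your proof is correct. The paper itself states this lemma as a citation of Zhu (Lemma 2.3 of \cite{Zhu}) without reproducing a proof, but your argument --- differentiate the Poisson integral under the integral sign, use $\partial_\theta P(re^{i(\theta-t)})=-\partial_t P(re^{i(\theta-t)})$, and integrate by parts using the absolute continuity of $F$, then apply Jensen's inequality and Fubini for the norm bound --- is exactly the device the paper uses in its proof of Proposition \ref{prop:integ}, so it matches the intended approach.
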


As a corollary, we obtain some information regarding the harmonic function $rf_r$, which is a harmonic conjugate to $f_\theta$.

\begin{cor}\label{main2}
Suppose that $F$ is an  absolutely continuous  function and  $f=P[F]$ is a harmonic mapping in $\mathbb{D}$ and
$\dot{F}\in L^{p}(\mathbb{T})$.
\ben
\item
If $p\in(1,\infty)$, then $rf_r \in h^{p}(\mathbb{D}).$
\item If $p=1$ or $p=\infty$, then $rf_r \in h^p(\mathbb{D})$ if and only if  $H(\dot{F}) \in L^p(\T)$.
\een
\end{cor}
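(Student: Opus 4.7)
The strategy is to combine Lemma~\ref{zhu:lem}, which gives $f_\theta = P[\dot F]$, with the observation recorded just after (\ref{rtheta}) that $rf_r$ is precisely the harmonic conjugate of $f_\theta$ normalized to vanish at the origin. Thus the whole corollary reduces to a classical question about the harmonic conjugate of $P[\dot F]$, and neither statement requires any independent analysis of $f_z$ or $f_{\bar z}$.

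For part (1), with $p\in(1,\infty)$, I would directly invoke the reformulation of M.~Riesz's theorem stated in the excerpt: for every $u\in h^p(\D)$ one has $\|u+i\tilde u\|_p\le B_p\|u\|_p$, so $\tilde u\in h^p(\D)$. Applied to $u=f_\theta=P[\dot F]$, whose $h^p$-norm is bounded by $\|\dot F\|_p$ (Lemma~\ref{zhu:lem}), this yields $rf_r\in h^p(\D)$ at once.

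For part (2), I would work with the identity $rf_r=\tilde P[\dot F]$. The sufficiency is a direct application of the Mashreghi-type theorem (Theorem~A in the excerpt): if $H(\dot F)\in L^p(\T)\subset L^1(\T)$, then $\tilde P[\dot F]=P[H(\dot F)]$, which lies in $h^p(\D)$. For the necessity, I would use that $\tilde P[\dot F]$ has radial boundary limits equal to $H(\dot F)$ almost everywhere. When $p=\infty$, the boundedness of $rf_r$ transfers directly to its a.e. radial limit, so $H(\dot F)\in L^\infty(\T)$. When $p=1$, I would write $rf_r=P[\mu]$ for a finite signed Borel measure $\mu$ on $\T$, note that the a.e. radial limits of $P[\mu]$ coincide with the Radon--Nikodym derivative of the absolutely continuous part of $\mu$, and conclude that this density must equal $H(\dot F)$ a.e., whence $H(\dot F)\in L^1(\T)$.

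The main delicacy is precisely this last step for $p=1$: $h^1(\D)$ coincides with $P[M(\T)]$ rather than with $P[L^1(\T)]$, so one cannot naively read off $L^1$ boundary values of $rf_r$ from its membership in $h^1$. One must first invoke the standard fact that the Poisson integral of the singular part of $\mu$ has vanishing radial limits a.e. in order to identify the absolutely continuous part and thereby integrate $H(\dot F)$. Everything else is routine bookkeeping built from Lemma~\ref{zhu:lem}, the identity (\ref{rtheta}), M.~Riesz's theorem, and the identification $\tilde P[\phi]=P[\tilde\phi]$.
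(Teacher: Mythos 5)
Your proof is correct, and part (1) coincides exactly with the paper's argument: both identify $rf_r$ as the normalized harmonic conjugate of $f_\theta=P[\dot{F}]$ (Lemma \ref{zhu:lem}) and invoke M.~Riesz's theorem. For part (2) the paper takes a shortcut you do not: it writes $irf_r(z)=2izf_z(z)-P[\dot{F}](z)$, notes that $P[\dot{F}]\in h^p(\D)$, concludes that $rf_r\in h^p(\D)$ if and only if $f_z\in H^p(\D)$, and then simply cites Theorem \ref{main1}(2). You instead analyze $rf_r=\tilde{P}[\dot{F}]$ directly, in effect re-running the radial-limit argument that the paper had already packaged into the proof of Theorem \ref{main1}(2); the identity $P[\dot{F}]+i\tilde{P}[\dot{F}]=2izf_z$ is exactly what makes the two routes equivalent. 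Yours is self-contained (it never mentions $f_z$), while the paper's is shorter given what precedes it. One point in your favour: for the necessity when $p=1$, the paper's underlying argument quietly uses that an analytic $H^1$ function has $L^1$ boundary values (F.~and M.~Riesz), whereas your treatment of the harmonic space $h^1(\D)$ via the representing measure $\mu$, its Lebesgue decomposition, and the a.e.\ vanishing of the radial limits of $P[\mu_s]$ is spelled out explicitly and is equally valid.
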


\section{Proofs}

The next result provides an integral representation of $f_z$ and $f_{\bar{z}}$ in terms of the Riesz projection of $\dot{F}$. 
\begin{prop}\label{prop:integ}
	Let $f=P[F]$ where $F$ is an  absolute continuous function on $\T$.
	 Then
	$$
	f_z= \frac{1}{2\pi i }\int_\T \frac{\dot{F}(w)}{w-z} \, |dw|, 
	$$
	and 
	$$
	f_{\bar {z}}= \frac{1}{2\pi i }\int_\T \frac{\dot{F}(w)}{\bar{w}-\bar{z}} \, |dw|. $$
	Thus
	\be\label{partialf}
	izf_z=P_+[\dot{F}](z), \quad  and \quad i z \overline{f_{\bar{z}}}(z)=P_+[\overline{\dot{F}}](z).  
	\ee
\end{prop}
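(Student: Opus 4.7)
The natural strategy is a two-step reduction: first derive the integral formulas for $f_z$ and $f_{\bar{z}}$ by differentiating the Poisson integral of $F$ and integrating by parts to move the derivative onto $F$; second, recognise the resulting integrals as Riesz projections via the Cauchy-type formula for $P_+$ recalled in Section~1. To begin, I would split the Poisson kernel into its holomorphic and antiholomorphic parts,
$$P(w) = \frac{1}{1-w} + \frac{1}{1-\bar{w}} - 1,$$
which, after substituting $w = ze^{-i\theta}$, decomposes $f = P[F]$ as $f(z) = A(z) + B(\bar{z}) - c_0$, where $A(z) = \frac{1}{2\pi i}\int_{\T} F(w)/(w-z)\,dw$ is the Cauchy integral of $F$ and $B$ is its antiholomorphic counterpart. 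Differentiating under the integral sign yields $f_z = A'(z) = \frac{1}{2\pi i}\int_{\T} F(w)/(w-z)^2\,dw$. An integration by parts on $\T$, using that $F$ is absolutely continuous and $2\pi$-periodic (so the boundary term vanishes) together with the identity $dF = \dot{F}(w)\,|dw|$, transfers the derivative onto $F$ and produces the first integral representation. The formula for $f_{\bar{z}}$ comes out analogously from the antiholomorphic summand $B(\bar{z})$.

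To identify the integrals with Riesz projections, I would invoke the Cauchy representation $P_+[\phi](z) = \frac{1}{2\pi i}\int_{\T}\phi(w)/(w-z)\,dw$ for $\phi \in L^1(\T)$. Since $dw = iw\,|dw|$ on $\T$, this gives
$$P_+[\dot{F}](z) = \frac{1}{2\pi}\int_{\T}\frac{w\,\dot{F}(w)}{w-z}\,|dw|.$$
Writing $w/(w-z) = 1 + z/(w-z)$ and using that the zeroth Fourier coefficient of $\dot{F}$ vanishes (by absolute continuity and $2\pi$-periodicity of $F$), the constant term drops out and one reads off $P_+[\dot{F}](z) = iz\,f_z(z)$, which is the first half of \eqref{partialf}. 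The companion identity $iz\,\overline{f_{\bar{z}}}(z) = P_+[\overline{\dot{F}}](z)$ follows by applying the same argument to $\bar{f} = P[\bar{F}]$, whose tangential boundary derivative is $\overline{\dot{F}}$, together with the elementary relation $\overline{f_{\bar{z}}} = (\bar{f})_z$.

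The main obstacle is justifying the integration by parts when $F$ is merely absolutely continuous: one must either interpret $dF$ in the Stieltjes sense so that absolute continuity of $F$ yields $dF = \dot{F}(w)\,|dw|$, or approximate $F$ by smooth $2\pi$-periodic functions in the $W^{1,1}(\T)$-norm and pass to the limit, relying on the $L^1$-continuity of the Cauchy integral in $\dot{F}$. Once this point is settled, the rest is algebraic bookkeeping: converting $dw$ to $|dw|$ via $dw = iw\,|dw|$ and carrying the normalisation $\widehat{\dot{F}}(0) = 0$ so that extraneous constants cancel. No further analytic subtlety arises beyond $L^1$-integrability of $\dot{F}$.
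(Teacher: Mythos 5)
Your proof is correct and follows essentially the same route as the paper: differentiate the Poisson integral (only the holomorphic part of the kernel survives $\partial_z$), integrate by parts using absolute continuity and periodicity of $F$, and identify the result with the Cauchy-type integral for $P_+$, using $\widehat{\dot{F}}(0)=0$ to dispose of the extraneous constant term. The only cosmetic difference is that you obtain the identity for $\overline{f_{\bar{z}}}$ by applying the first identity to $\overline{F}$ via $\overline{f_{\bar{z}}}=(\overline{f})_z$, whereas the paper simply says the computation is similar.
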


\begin{proof}
Let $P$ be the Poisson kernel. Then
	$$P_z=\partial P(z)=\frac{1}{(1-z)^2}.$$
	Thus 
	$$f_z(z)= \frac{1}{2\pi} \int_\T \partial P(ze^{-it}) e^{-it} F(e^{it}) dt =\frac{1}{2\pi}\int_\T \frac{e^{-it}}{(1-ze^{-it})^2} F(e^{it}) dt. $$
	As
	$$\frac{d}{dt}\left( \frac{1}{1-ze^{-it}}\right)= \frac{-ize^{-it}}{(1-ze^{-it})^2},$$
	
	\noindent and $F$ is absolutely continuous, using integration by parts, we obtain 
	$$ iz  f_z(z)=\frac{1}{2\pi} \int_\T \frac{\dot{F}(e^{it})}{1-ze^{-it}} dt=\frac{1}{2\pi i} \int_{\T} \frac{\dot{F}(w)}{w-z}\, dw=C[\dot{F}]=P_+[\dot{F}].$$
	Hence, 
	$$	f_z(z)= \frac{1}{2\pi i }\int_\T \frac{e^{-it}}{1-ze^{-it}} \dot{F}(e^{it})\, dt=\frac{1}{2\pi i }\int_\T \frac{\dot{F}(w)}{w-z} \, |dw|.$$
Similar computations provide the expression of $f_{\bar{z}}$.
\end{proof}

\noindent {\it Proof of Theorem \ref{main1}.} \hfill

(1) Let $p\in (1,\infty)$ and $f=P[F]$ with $\dot{F} \in L^p(\T)$. Combining  Proposition \ref{partialf} and   Riesz's theorem, we conclude that  $P_+[\dot{F}]=izf_z$  is an element of $H^p$   and there exists a constant $A_p$ such that
$\|zf_z \|_{p} \leq A_p \|\dot{F}\|_{L^p}.$
Since $\|zf_z \|_{p}=\|f_z \|_{p}$, we deduce that $f_z \in H^p$ and $\|f_z \|_{p} \leq A_p \|\dot{F}\|_{L^p}.$

(2) Let $p=1$ or $p=\infty$. Let $h=P[\dot{F}]$. Clearly $h$ is harmonic and $h(0)=0$. Thus by (\ref{harconj2}) we get 
$h+i\tilde{h}=2P_{+}(h)$, that is
\be\label{eq:fund}
P[\dot{F}]+i \tilde{P}[\dot{F}]=2P_+(\dot{F})=2izf_z.
\ee
Assume that $f_z\in H^p(\D)$, then its boundary $(f_z)_*$ belongs to $L^p(\T)$ obtained by taking radial limits, that is,  $(f_z)_*(\xi)=\lim_{r\to 1} f_z(r\xi)$ a.e. As $h_*=\dot{F}$, by (\ref{eq:fund}), we deduce that $(\tilde{h})_*= (\tilde{P}[\dot{F}])_*=H(\dot{F})\in L^p(\T)$.

Conversely, assume that $H(\dot{F})\in L^{p}(\T)$, then 
$
\tilde{P}[\dot{F}]=P[H(\dot{F})].$ Therefore, 
$2izf_z=P[\dot{F}+i H(\dot{F}) ]$  belongs to $H^p(\D)$, as $\dot{F}+i H(\dot{F}) \in L^p(\T)$. \hfill $\square$\\

\noindent {\it Proof of Corollary \ref{main2}.} \hfill

1) Let $p\in(1,\infty)$. As $rf_r$ is the harmonic conjugate of $f_\theta=P[\dot{F}]\in h^p(\D)$, by Riesz theorem, we deduce that $rf_r \in h^{p}(\D)$.

2) In the case $p=1$ or $p=\infty$. Using the identity 
$$irf_r(z)=2izf_z(z)-P[\dot{F}](z),
$$
 we deduce that $rf_r \in h^p$ if and only if $f_z\in H^p$, since $P[\dot{F}]\in h^p(\D)$. The conclusion follows from Theorem \ref{main1}. \\

\begin{example}
We provide an example of a harmonic mapping $f=P[F]$, where $F$ is an absolutely continuous function with $\dot{F}\in L^\infty(\T)$, such that neither $f_z$ nor $f_{\overline z}$ is in $L^\infty(\D)$.\\

Let $F(t):= |t|$ on $[-\pi,\pi]$. $F$ is an absolute continuous function on $\T$ and 
$\dot{F}=-1$ on $(-\pi,0)$ and $\dot{F}=1$ on $(0,\pi)$. By Proposition \ref{prop:integ}, $f_z$ has the following integral representation    $$
 iz  f_z(z)=\frac{1}{2\pi} \int_\T \frac{\dot{F}(e^{it})}{1-ze^{-it}} dt=\sum_{n\geq 0} \widehat{\dot{F}}(n) z^n, 
	$$
	where $\widehat{\dot{F}}(n)$ denotes the Fourier coefficient of $\dot{F}$ of order $n$. 
Elementary computations guarantee that for $z\in \D$, 

$$ izf_z(z)=-\frac{2i}{\pi} \sum_{n\geq 0} \frac{z^{2n+1}}{2n+1}=- \frac{2i}{\pi}\arctanh (z)=- \frac{2i}{\pi} \log \left(\frac{1+z}{1-z} \right).$$

Thus $|f_z(r)|$ goes to $\infty$ as $r\to 1$. In addition, one  can compute the Hilbert transform of $\dot{F}$. Indeed,  the boundary function of $izf_z$ is given by $- \frac{2i}{\pi} \log \left(\frac{1+e^{i\theta}}{1-e^{i\theta}} \right)=-\frac{2i}{\pi} \log(i\cot  \frac{\theta}{2}  )=\dot{F}+iH(\dot{F}),$ and $izf_z=P[\dot{F}+iH(\dot{F})]$, as $f_z\in H^2$.
This shows that
$$H(\dot{F})=\frac{2}{\pi}  \ln | \tan \frac{\theta}{2} |\not \in L^\infty(\T).$$
\hfill$\square$
\end{example}

We close this paper by providing an elementary proof of Theorem B using only Proposition \ref{prop:integ} and without switching to polar coordinates as in \cite{Zhu,CPW}.\\

Let $p\in [1,\infty)$ and $f=P[F]$, where $\dot{F}\in L^p(\T)$, we will prove that $f_z$ is in $L^p(\D)$. By Proposition \ref{prop:integ}, $f_z$ has the following integral representation    $$
	f_z(z)= \frac{1}{2\pi i }\int_\T \frac{\dot{F}(w)}{w-z} \, |dw|. 
	$$
According to Jensen's inequality, we have

$$|f_z(z)|^p \leq \frac{1}{2\pi} \int_0^{2\pi} \frac{|\dot{F}(e^{it})|^p}{|1-ze^{-it}|} dt \left( \frac{1}{2\pi} \int_\T \frac{dt}{|1-ze^{-it}|}\right)^{p-1}. $$
Next, by  Fubini's theorem, we obtain that
\be\label{last:eq}
\int_0^{2\pi} | f_z(re^{i\theta})|^p d\theta \leq |\dot{F}|_p^p \left( \frac{1}{2\pi} \int_\T \frac{dt}{|1-ze^{-it}|}\right)^p.
\ee
When $r$ goes to $1$, we have, see Rudin \cite[Prop 1.4.10]{rud2}
 $$ \frac{1}{2\pi}\int_\T \frac{dt}{|1-ze^{-it}|} \simeq \ln \frac{1}{1-r^2},\quad r=|z|.$$

As $\left( \ln \frac{1}{1-r^2}\right)^p  $ is integrable near $1$, we deduce that $f_z \in L^p(\D)$. We remark that the  identity  (\ref{last:eq}) is not enough to prove that $ f_z$ is in the Hardy space $H^p(\D)$ and the use of M. Riesz's theorem is essential to reach our conclusion. \\

In addition, using Proposition \ref{prop:integ}, we provide pointwise   estimates of $|f_z|$ and $|f_\zb|$ in terms of $\|\dot{F}\|_p$.

\begin{prop}
	Let $f=P[F]$ and $\dot{F}\in L^p(\T)$ with $p\in(1,\infty)$, then there exists $C_p>0$ such that
	$$\max(|f_z(z)|,\, | f_{\zb}(z)|) \leq  \frac{C_p \|\dot{F}\|_p}{(1-|z|^2)^{1/p}} .$$
	
\noindent	Thus
	$$|D_f(z)| \leq  \frac{2C_p \|\dot{F}\|_p }{(1-|z|^2)^{1/p}}, $$
with 
$$C_{p}=\left(\frac{\Gamma(q-1)}{\Gamma^2(q/2)}\right)^{\frac{1}{q}},$$
where $q$ is the conjugate of $p$.
\end{prop}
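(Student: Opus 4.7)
The plan is to derive the pointwise bounds for $f_z(z)$ and $f_{\bar z}(z)$ directly from the Cauchy-type integral representations of Proposition~\ref{prop:integ}, by combining H\"older's inequality with a classical kernel estimate. Taking absolute values in the identity $f_z(z)=\frac{1}{2\pi i}\int_\T \dot F(w)(w-z)^{-1}|dw|$ and applying H\"older with conjugate exponents $p$ and $q$, I would first write
\[
|f_z(z)|\le \frac{(2\pi)^{1/p}}{2\pi}\,\|\dot F\|_p\,\left(\int_0^{2\pi}\frac{dt}{|e^{it}-z|^q}\right)^{1/q}.
\]
A rotation in $t$ reduces the kernel integral to the case $z=r:=|z|\in[0,1)$, so everything boils down to estimating $\int_0^{2\pi}|1-re^{it}|^{-q}\,dt$ with the right Gamma-function constant.

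The central step, from which the precise value of $C_p$ will emerge, is the identity
\[
\int_0^{2\pi}\frac{dt}{|1-re^{it}|^q}=2\pi\,{}_2F_1\!\left(\tfrac{q}{2},\tfrac{q}{2};1;r^2\right),
\]
obtained by writing $|1-re^{it}|^{-q}=(1-re^{it})^{-q/2}(1-re^{-it})^{-q/2}$, expanding each factor as a binomial series, and integrating term by term. Coupled with the sharp bound
\[
{}_2F_1\!\left(\tfrac{q}{2},\tfrac{q}{2};1;x\right)\le \frac{\Gamma(q-1)}{\Gamma^2(q/2)}(1-x)^{1-q},\qquad x\in[0,1),\;q>1,
\]
this yields the desired kernel estimate. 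The inequality for ${}_2F_1$ can be read off from Gauss' asymptotic formula at the edge of the disc of convergence (the case $c-a-b=1-q<0$), and extends from an asymptotic to an inequality for all $x\in[0,1)$ by monotonicity of $(1-x)^{q-1}\,{}_2F_1(q/2,q/2;1;x)$.

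Substituting back and using $1/p+1/q=1$, hence $(q-1)/q=1/p$, the factors $(2\pi)^{1/p}$ and $(2\pi)^{1/q}$ collapse and the powers of $1-r^2$ combine to exactly $(1-|z|^2)^{-1/p}$, leaving the constant $C_p=(\Gamma(q-1)/\Gamma^2(q/2))^{1/q}$. An identical argument starting from the analogous formula for $f_{\bar z}$ in Proposition~\ref{prop:integ} gives the same bound for $|f_{\bar z}|$, and the triangle inequality applied to $|D_f(z)|\le |f_z(z)|+|f_{\bar z}(z)|$ produces the factor~$2$. The main obstacle I foresee is securing the hypergeometric estimate with the precise constant $\Gamma(q-1)/\Gamma^2(q/2)$ valid for every $r\in[0,1)$ rather than merely asymptotically; once this classical fact is in hand, the rest is a straightforward bookkeeping exercise with H\"older and the identity $1/p+1/q=1$.
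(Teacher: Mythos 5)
Your argument is correct and follows essentially the same route as the paper: H\"older's inequality applied to the Cauchy-type representation of Proposition~\ref{prop:integ}, followed by the sharp kernel estimate $\frac{1}{2\pi}\int_0^{2\pi}|1-re^{it}|^{-q}\,dt\le \frac{\Gamma(q-1)}{\Gamma^2(q/2)}(1-r^2)^{1-q}$. The only difference is that the paper imports this bound from \cite{KMM} (their $I_{q-1}$ estimate), whereas you reprove it via the hypergeometric identity; your monotonicity claim is indeed justified, since Euler's transformation gives $(1-x)^{q-1}{}_2F_1(q/2,q/2;1;x)={}_2F_1(1-q/2,1-q/2;1;x)$, a power series with nonnegative coefficients whose limit at $x=1$ is $\Gamma(q-1)/\Gamma^2(q/2)$ by Gauss's theorem.
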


\begin{proof}
By Proposition \ref{prop:integ}, $f_z$ has the following integral representation    $$
	f_z(z)= \frac{1}{2\pi  i}\int_\T \frac{\dot{F}(w)}{w-z} \, |dw|. 
	$$ Hence,
by H\"older inequality, we deduce that
$$| f_z(z)| \leq \left(\frac{1}{2\pi} \int_\T \frac{dt}{|1-ze^{-it}|^q} \right)^{\frac{1}{q}} ||\dot{F}||_p.$$

Let us denote
$$ I_a(z):=\frac{1}{2\pi} \int_0^{2\pi} \frac{dt}{|1- ze^{-it}|^{a+1}}. $$

In \cite[Proposition 1.1]{KMM}, we proved that if $a>0$, then $$ 	I_{a} (z) \leq\frac{\Gamma(a)}{\Gamma^2(a/2+1/2)} \frac{1}{(1-|z|^2)^a}.$$
Thus
$$|f_z(z)| \leq \left(I_{q-1}(z)\right)^{1/q} \|\dot{F}\|_p\leq \frac{C_p \|\dot{F}\|_p}{(1-|z|^2)^{1/p}}. $$
\end{proof}

As a consequence, we obtain the following about the Lipschitz continuity of harmonic functions on the unit disc $\D$. In  \cite{MMV}, the authors provided a characterizations when a Poisson transformation of a Lipschitz function on the circle is Lipschitz on the disc. 
\begin{prop}
	Let $f=P[F]$ and $\dot{F}\in L^p(\T)$ with $p\in(1,\infty)$. Then $f$ is $\frac{1}{q}$-H\"older continuous on $\D$, where $q$ is the conjugate of $p$.  
\end{prop}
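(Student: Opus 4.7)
The plan is to read off Hölder continuity directly from the gradient bound established in the preceding proposition. That proposition supplies a constant $K=K_p\|\dot F\|_p$ with
$$|\nabla f(z)| \le |f_z(z)|+|f_{\bar z}(z)| \le \frac{K}{(1-|z|^2)^{1/p}} \le \frac{K}{(1-|z|)^{1/p}}, \qquad z\in\D.$$
Since $1-\tfrac{1}{p}=\tfrac{1}{q}$, this is exactly the blow-up rate that the classical Hardy--Littlewood growth principle converts into $\tfrac{1}{q}$-Hölder regularity.

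Fix $z_1,z_2\in\D$ and let $\delta:=|z_1-z_2|$. If $\delta\ge 1/2$, the desired inequality reduces to the boundedness of $f$ on $\D$, which is immediate because $F$, being absolutely continuous on the compact circle $\T$, is bounded, and hence so is $f=P[F]$. For $\delta<1/2$ I introduce the ``pulled-in'' points
$$\zeta_j := \begin{cases}(1-\delta)\,z_j/|z_j|, & |z_j|>1-\delta,\\ z_j, & |z_j|\le 1-\delta,\end{cases}$$
and split
$$f(z_1)-f(z_2)=\bigl[f(z_1)-f(\zeta_1)\bigr]+\bigl[f(\zeta_1)-f(\zeta_2)\bigr]+\bigl[f(\zeta_2)-f(z_2)\bigr].$$

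For each radial piece I parameterize the segment from $z_j$ to $\zeta_j$ by $s\mapsto s\,z_j/|z_j|$ and bound
$$|f(z_j)-f(\zeta_j)|\le \int_{1-\delta}^{|z_j|}\!\frac{K\,ds}{(1-s)^{1/p}} \le K\int_0^{\delta} u^{-1/p}\,du = q\,K\,\delta^{1/q}.$$
For the middle term I observe that the segment $[\zeta_1,\zeta_2]$ lies in the convex disc $\{|w|\le 1-\delta\}$, has length at most $|\zeta_1-z_1|+|z_1-z_2|+|z_2-\zeta_2|\le 3\delta$, and the gradient on it is uniformly dominated by $K\,\delta^{-1/p}$; hence $|f(\zeta_1)-f(\zeta_2)|\le 3K\,\delta^{1/q}$. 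Summing the three contributions yields $|f(z_1)-f(z_2)|\le (2q+3)\,K_p\,\|\dot F\|_p\,|z_1-z_2|^{1/q}$.

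The step I expect to require the most care is purely bookkeeping: verifying that the two degenerate cases ($|z_j|\le 1-\delta$, in which the radial piece vanishes and $\zeta_j=z_j$) slot into the same estimate, and that the middle segment genuinely stays in $\{|w|\le 1-\delta\}$ by convexity. The exponent arithmetic is forced by $1-\tfrac1p=\tfrac1q$, and the whole argument relies on the integrability of $(1-r)^{-1/p}$ at $r=1$, which is exactly why the result degenerates at the excluded endpoint $p=1$ (i.e.\ $q=\infty$).
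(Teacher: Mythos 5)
Your proof is correct and follows the same route as the paper: combine the pointwise gradient bound $|Df(z)|\le 2C_p\|\dot F\|_p(1-|z|^2)^{-1/p}$ from the preceding proposition with the Hardy--Littlewood/Gehring--Martio principle, using $1-\tfrac1p=\tfrac1q$. The only difference is that the paper simply cites the Gehring--Martio lemma at this point, whereas you reprove it via the standard pull-in-and-split argument; your version is self-contained and the details (the radial integrals, convexity of $\{|w|\le 1-\delta\}$, and the large-$\delta$ case handled by boundedness of $f$) all check out.
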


The proof is an immediate consequence of the following proposition essentially due to 	Gehring and Martio.
\begin{prop}\cite{gh}
	Let $f\in\mathcal{C}^1(\D)$ be such that 
	$$|Df(z)|\leq \frac{C}{(1-|z|^2)^{1-\alpha}},\ \ \ \ \ z\in\D,$$
for $0<\alpha\leq1$ and $C>0$. Then $f$ is $\alpha$-H\"older continuous  on $\D$.
\end{prop}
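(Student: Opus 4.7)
The plan is to estimate $|f(z_1)-f(z_2)|$ for arbitrary $z_1,z_2\in\D$ by integrating the derivative along a well-chosen path. Set $d=|z_1-z_2|$ and $\rho=\max(|z_1|,|z_2|)$. For any piecewise $\CC^1$ curve $\gamma\subset\D$ joining $z_1$ to $z_2$, the fundamental theorem of calculus and the hypothesis give
$$|f(z_2)-f(z_1)|\le\int_\gamma|Df(z)|\,|dz|\le C\int_\gamma(1-|z|^2)^{\alpha-1}\,|dz|,$$
so the whole problem is to choose $\gamma$ so that the right-hand side is $O(d^\alpha)$. When the two points are close relative to their distance from the boundary, namely $d\le(1-\rho)/2$, I would take $\gamma$ to be the straight segment $(1-t)z_1+tz_2$, which stays in $\{|z|\le(1+\rho)/2\}$; on this set $1-|z|^2\gtrsim 1-\rho\ge 2d$, so using $\alpha-1\le 0$ we obtain $\int_\gamma(1-|z|^2)^{\alpha-1}|dz|\le C\,d\,(1-\rho)^{\alpha-1}\le C\,d^\alpha$.

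The substantive case is $d>(1-\rho)/2$, where the straight chord can dive too close to $\partial\D$. Here I would replace it by a three-piece detour: from $z_j$ radially inward to $w_j:=(1-2d)\,z_j/|z_j|$ (with $w_j$ taken to be the origin when $|z_j|\le 2d$), and then across the chord from $w_1$ to $w_2$. On each radial leg, using $(1-s^2)^{\alpha-1}\le(1-s)^{\alpha-1}$ for $\alpha\le 1$, the contribution is at most
$$\int_{1-2d}^{1}(1-s)^{\alpha-1}\,ds=\frac{(2d)^\alpha}{\alpha}.$$
On the middle chord, every point satisfies $|z|\le 1-2d$, so $1-|z|^2\ge 2d$, and the chord length is bounded by $|z_1-z_2|+|z_1-w_1|+|z_2-w_2|=O(d)$; thus the middle contribution is $O(d\cdot d^{\alpha-1})=O(d^\alpha)$. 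Summing the three pieces yields $|f(z_1)-f(z_2)|\le C_\alpha d^\alpha$, which is the desired $\alpha$-H\"older bound.

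The main obstacle is the construction and analysis of the detour in the hard case, ensuring uniformly in the positions of $z_1,z_2$ that each piece contributes $O(d^\alpha)$. Some careful book-keeping is required in the degenerate sub-cases: when $w_j$ would fall outside $\D$ (i.e.\ $|z_j|\le 2d$) one replaces it by the origin, and when $d$ is comparable to the diameter of $\D$ one appeals directly to the boundedness of $f$, which itself follows from the gradient bound by integrating radially from $0$ and using that $\int_0^1(1-s^2)^{\alpha-1}\,ds<\infty$ precisely because $\alpha>0$. All remaining estimates reduce to elementary one-variable integrals of $(1-s^2)^{\alpha-1}$ near $s=1$.
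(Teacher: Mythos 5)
Your argument is correct and is the standard path-integration proof of this Hardy--Littlewood/Gehring--Martio type statement; the paper itself gives no proof at all, only the citation to Gehring--Martio, so there is nothing in the text to compare against. One small point worth tightening: your rule ``$w_j$ is the origin when $|z_j|\le 2d$'' is not quite the right degenerate split, and when $|z_j|<1-2d$ your ``radially inward'' move is actually outward; both issues are harmless because in the case $d>(1-\rho)/2$ one has $|z_j|>1-3d$ for both $j$, so each radial leg contributes at most $\int_{\min(|z_j|,\,1-2d)}^{1}(1-s)^{\alpha-1}\,ds\le (3d)^{\alpha}/\alpha$, and the sub-case $|z_j|\le 2d$ forces $d>1/5$, where boundedness of $f$ (which you correctly derive from $\int_0^1(1-s^2)^{\alpha-1}\,ds<\infty$) finishes the estimate.
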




\end{document}